\def\tank#1{\protected@xdef\@thanks{\@thanks
 \protect\footnotetext[0]{#1}}}
\def\bigfoot{

 \@footnotetext}
\newcommand{\ea}{\end{array}}
\newtheorem{theorem}{Theorem}[section]
\newtheorem{lemma}{Lemma}[section]
\newtheorem{proposition}[theorem]{Proposition}
\newtheorem{cor}[theorem]{Corollary}
\newtheorem{rem}{Remark}[section]
\def\beq{\begin{equation}}
\def\nneq{\end{equation}}
\def\bthm{\begin{theorem}}
\def\nthm{\end{theorem}}
\def\blem{\begin{lemma}}
\def\nlem{\end{lemma}}
\def\bprf{\begin{proof}}
\def\nprf{\end{proof}}
\def\bprop{\begin{prop}}
\def\nprop{\end{prop}}
\def\brmk{\begin{rem}}
\def\nrmk{\end{rem}}
\def\bexa{\begin{exa}}
\def\nexa{\end{exa}}
\def\bcor{\begin{cor}}
\def\ncor{\end{cor}}
\newcommand{\ee}{\mathbb{E}}
\newcommand{\rr}{\mathbb{R}}
\newcommand{\pp}{\mathbb{P}}
\newcommand{\qq}{\mathbb{Q}}
\def\FF{\mathcal F}
\def\HH{\mathcal H}
\def\SS{\mathcal S}
\title[Transportation inequalities  for stochastic heat equations]{Transportation inequalities under uniform metric  for a stochastic heat equation  driven by time-white and  space-colored noise}
\author{Shijie Shang}
 \curraddr[Shang, S.]{School of Mathematics, University of Science and Technology of China, Hefei, 230026, China.}
\email{sjshang@ustc.edu.cn}
\author{Ran Wang}
\curraddr[Wang, R.]{School of Mathematics and Statistics, Wuhan University, Wuhan, 430072, China}
\email{rwang@whu.edu.cn}
\date{}
\begin{document}
\maketitle

\noindent{\bf Abstract}
 In this paper, we prove   transportation inequalities on the space of  continuous paths with respect to the uniform metric, for the law of solution  to a stochastic heat equation  defined on $[0,T]\times [0,1]^d$. This equation is driven by the Gaussian noise, white in time and colored in space. The proof is based on a new moment inequality under the uniform metric for the stochastic convolution with respect to the time-white and space-colored noise, which is of independent interest.
\vskip0.3cm
%  Keyword is required.
\noindent {\bf Keywords}{ Stochastic  heat equation, Transportation inequality, Girsanov transformation.}

\vskip0.3cm

\noindent {\bf Mathematics Subject Classification (2000)}{ 60E15; 60H15.}
\maketitle
%  \subjclass is required.

%%%%%%%%%%%%%%%%%%%%%%%%%%%%%%%%%%%%%%%%%%%%%%%%%%%%%%%%%%%%

\section{Introduction}

    The purpose of this paper is to study Talagrand's $T_2$-transportation inequality for the following $d$-dimensional spatial stochastic heat equation on $[0,1]^d$,
\begin{equation}\label{SPDE}
    \begin{cases}   \frac{\partial }{\partial t} u (t,x)=\Delta u(t,x)+ \sigma(u(t,x))\dot{F}(t,x)+b(u(t,x)),\ \  t\ge0, \ x\in(0,1)^d,\\
    u(t,x)=0,\ \ \ x\in \partial ([0,1]^d),\\
     u(0,x)=u_0(x),\ \ \  x\in[0,1]^d,
                        \end{cases} \end{equation}
          where $\Delta$ is the Laplacian operator on $(0,1)^d$, $\partial ([0,1]^d)$ is the boundary of $[0,1]^d$, and $u_0$ is a continuous function on $[0,1]^d$ with $u_0(x)=0$ for any $x\in \partial ([0,1]^d)$.
 Assume that the coefficients  $\sigma$ and $b$ satisfy the following conditions:
 \begin{itemize}
   \item[(C1).]  $\sigma$ and $b$ are Lipschitzian,
       i.e., there exist some constants $L_{\sigma}, L_{b}\in[0,\infty)$ such that
\begin{equation}\label{Lip}
|\sigma(v_1)-\sigma(v_2)|\le L_{\sigma}|v_1-v_2|, \ \ |b(v_1)-b(v_2)|\le L_{b}|v_1-v_2|, \ \ \ \forall v_1,v_2\in \mathbb R.
\end{equation}
   \item[(C2).]   $\sigma$ is bounded, i.e., there exists a constant $K_{\sigma}\in(0,\infty)$ such that
   \begin{equation}\label{bound}
|\sigma(v)|\le K_{\sigma},\ \ \  \forall v\in \mathbb R.
\end{equation}

 \end{itemize}

 Throughout this paper, we work on a filtered probability space $(\Omega,\mathcal{F},\{\mathcal{F}_t\}_{t\ge0},\mathbb{P})$, where the filtration $\{\mathcal{F}_t\}_{t\ge0}$ satisfies the usual conditions.
The noise $F=\{F(\varphi),\varphi:\rr^{d+1}\rightarrow \rr\}$ is an $L^2(\Omega,\FF,\pp)$-valued Gaussian process with mean zero and covariance functional given by
\begin{equation}\label{covariance1}
J(\varphi,\psi):=\int_{\rr_+}ds\int_{\rr^d}dx\int_{\rr^d}dy \varphi(s,x)f(x-y)\psi(s,y),  \ \ \varphi,\psi\in  \SS(\rr^{d+1}) ,
\end{equation}
where  $f:\rr^d\to\rr_+$ is  continuous on $\rr^d\setminus \{0\}$, and
 $\SS(\rr^{d+1})$    is  the space of all Schwartz functions on $\rr^{d+1}$, all of whose derivatives are rapidly decreasing.
 As a covariance functional of a Gaussian process, the functional  $J(\cdot, \cdot)$ should be non-negative definite, this implies that $f$ is symmetric ($f(x)=f(-x)$ for all $x\in \mathbb R^d$), and is equivalent to the existence of a non-negative tempered measure $\lambda$ on $\mathbb R^d$, whose Fourier transform is $f$. More precisely, the relationship between
 $\lambda$ and $f$ is that for all $\varphi\in \mathcal S(\mathbb R^d)$,
\begin{equation}\label{eq f lambda}
\int_{\mathbb R^d} f(x)\varphi(x)dx=\int_{\mathbb R^d}\mathcal F\varphi(\xi)\lambda(d\xi),
\end{equation}
here  $\mathcal F\varphi$ is the Fourier transform of $\varphi$, $
 \mathcal F\varphi(\xi):=\int_{\mathbb R^d}\exp(-2i\pi\xi\cdot x)\varphi(x)dx$. See \cite{Dal} for details.

In this paper,   we assume the following hypothesis:
 \begin{itemize}
   \item[$(H_{\eta}).$] There exists a constant  $\eta\in[0,1)$ satisfying that
\begin{equation}\label{H eta1}
  K_{\eta}:=\int_{\rr^d}\frac{\lambda(d\xi)}{(1+|\xi|^2)^{\eta}}<+\infty.
\end{equation}

\end{itemize}

For instance, the tempered measure $\lambda$ associated with  the function $f(x)=|x|^{-\kappa},\kappa\in(0,2\wedge d)$,   satisfies \eqref{H eta1}.

\vskip 0.3cm

As in R. Dalang \cite{Dal}, the Gaussian process $F$ can be extended to a worthy martingale measure, in the sense of J.  Walsh \cite{Walsh}, thus we can use the  Walsh integral against $F$ to
%the Walsh integral with to $F$ is well-defined. Now, we can
give the definition of mild solutions to (\ref{SPDE}) as follows.
%\begin{align}\label{eq M}
%M=\{M_t(A)=F(t, A),\ \ t\in \rr_+, A\in\BB_b(\rr^d)\},
%\end{align}
%where $\BB_b(\rr^d)$ denotes the collection of all bounded Borel measurable sets in $\rr^d$.
A continuous adapted random field $u=\{u(t,x), (t,x)\in\rr_+\times[0,1]^d\}$ is called a mild solution of equation \eqref{SPDE}, if it satisfies
\begin{align}\label{solution}
 u(t,x)=& \int_{[0,1]^d} G_{t}(x,y) u_0(y) dy + \int_0^t\int_{[0,1]^d}G_{t-s}(x,y)\sigma(u(s,y))F(ds,dy)\notag\\
 &+\int_0^tds\int_{[0,1]^d}dyG_{t-s}(x,y)b(u(s,y)), \quad \mathbb{P}-a.s., \quad x\in [0,1]^d, \quad t\geq 0 ,
 \end{align}
where $G_t(x, y)$ is the Green kernel associated with the  heat equation on $[0,1]^d$:
\begin{equation}\label{heat kernel}
    \begin{cases}   \frac{\partial}{\partial t}G_t(x,y)=\Delta_x G_t(x,y),\ \ \  t\ge0,\ x,y\in(0,1)^d,\\
    G_t(x,y)=0,\ \ \ x\in \partial ([0,1]^d),\\
     G_0(x,y)=\delta(x-y).
                        \end{cases} \end{equation}

%Denote $D_T^d:=[0,T]\times [0,1]^d$. If $d>1$, the evolution equation can not be driven by the Browinan sheet because $G$ does not belong to $L^2(D_T^d)$  and we need to work with a smoother noise.

The study of existence and uniqueness of solution  to equation \eqref{solution} on $\rr^d$ has been studied by R. Dalang in \cite{Dal}. Many other authors have also studied  $d$-dimensional spatial stochastic heat equations, see \cite{FK,MS} and references therein.
Under the assumption (C1) and $(H_\eta)$ for $\eta\in[0,1)$, D. M\'{a}rquez-Carreras and M. Sarr\`{a} \cite{MS} proved that equation \eqref{SPDE} admits a unique solution.

\vskip 0.5cm

In this paper, we shall study Talagrand's transportation inequality for solution of equation \eqref{SPDE} under the uniform metric. Let us first recall the transportation inequality.  Let $(E,d)$ be a metric space, and $\mathcal M(E)$ be the space of all probability measures on $E$.
%  equipped with $\sigma$-field $\mathcal B$ such that $d(\cdot, \cdot)$ is $\mathcal B\times \mathcal B$ measurable.
  Given    $\mu,\nu\in \mathcal M(E)$ and $p\ge1$, the Wasserstein distance is defined by
\[
 W_{p}(\mu, \nu):=\inf_{\pi}\left[\int_{E}\int_E d(x,y)^p \, \pi(dx,dy) \right]^{\frac1p},
\]
where the infimum is taken over all  the probability measures  $\pi$   on $E\times E$ with marginal distributions $\mu$ and $\nu$.  The relative entropy of $\nu$ with respect to $\mu$ is defined as
\begin{equation}\label{rate function1}
 H(\nu|\mu):=\left\{
       \begin{array}{ll}
         \int \log \frac{d\nu}{d\mu}d\nu,   & \text{if } \nu\ll \mu ;\\
        +\infty, & \hbox{\text{otherwise}.}
       \end{array}
     \right.
\end{equation}
The probability measure $\mu$ satisfies the $T_p$-transportation inequality on $(E, d)$ if there exists a constant $C>0$ such that for any probability measure $\nu$ on $E$,
\[
W_{p}(\mu,\nu)\le \sqrt{2C  H(\nu|\mu)}.
\]
As usual, we write $\mu\in {\bf T_{p}}$ for this relation. The properties ${\bf T_1}$ and ${\bf T_2}$ are particularly interesting.

It is known that Talagrand's transportation inequality  is closely  related to the concentration of measure phenomenon, the log-Sobolev  and Poincar\'e inequalities, see for instance  monographs \cite{BGL, Ledoux, M, Villani}.  Recently, the problem of  transportation inequalities to stochastic (partial) differential equations has been widely studied  and is still a very active research area from both a theoretical and an applied point of view, for example see  \cite{DP, Goz, La} and references therein.

 The work of M. Talagrand \cite{Tal} on the Gaussian measure had been  generalized by D. Feyel and A. S. \"Ustunel  \cite{FU} to the framework of the  abstract Wiener space. For  stochastic differential equations (SDEs for short), by means of Girsanov transformation and the martingale representation theorem, the ${\bf T_2}$   w.r.t. the $L^2$ and the Cameron-Martin metrics were established by  H.  Djellout {\it et al.} \cite{DGW}; the ${\bf T_2}$ w.r.t. the uniform metric was obtained by L. Wu and Z. Zhang \cite{WZ2004}.  J. Bao {\it et al.} \cite{BWY} established the ${\bf T_2}$ w.r.t. both the uniform and the $L^2$ metrics on the path space for the segment processes associated with a class of neutral functional stochastic differential equations.  B. Saussereau \cite{Sau} studied the ${\bf T_2}$ for SDEs driven by fractional Brownian motion, and  S. Riedel \cite{Rie} extended this result to SDEs driven by general Gaussian processes by using Lyons' rough paths theory.

For  stochastic partial  differential equations (SPDEs for short), L. Wu and Z. Zhang  \cite{WZ2006} studied the ${\bf T_2}$ w.r.t. the $L^2$ metric by Galerkin  approximations. B. Boufoussi and S. Hajji   \cite{BH} obtained the ${\bf T_2}$ w.r.t. the $L^2$ metric for stochastic heat equations driven by space-time white noise by Girsanov transformation,  while D. Khoshnevisan and A. Sarantsev \cite{KS} studied this problem for more general SPDEs. T. Zhang and the first named author  \cite{SZ}  established the  ${\bf T_2}$ w.r.t. the uniform metric for the stochastic heat equation driven by multiplicative space-time white noise. The above results are all forced on the SPDEs with   deterministic initial values. Recently, F.-Y. Wang and T. Zhang  \cite{WZ} studied the transportation inequalities
for SPDEs with random initial values.

 The aim of this paper is to prove that under the uniform metric, the ${\bf T_2}$ holds for stochastic heat equations  driven by multiplicative time-white and  space-colored noise. Our new contribution is the $p$th-moment inequality under the uniform metric for the stochastic convolution with respect to the  time-white and  space-colored noise, which is of independent interest.

 The rest of the paper is organized as follows. In Section 2, we  first recall some facts about the stochastic integrals with respect to
the  time-white and  space-colored noise, and establish the $p$th-moment inequality under the uniform metric for the stochastic convolution with respect to the  time-white and  space-colored noise. In Section 3, we prove the ${\bf T_2}$ for the law of the solution to equation  \eqref{SPDE}.

\vskip 1.5cm

\section{Stochastic integrals and moment estimates for the stochastic evolution}

\subsection{Stochastic integrals with respect to  time-white and  space-colored noise}

Recall \eqref{eq f lambda}. Denote by $\HH$ the Hilbert space obtained by the completion of $\SS(\mathbb R^d)$ with respect to the inner product
\begin{align}\label{190719.194202}
\langle \phi,\psi\rangle_{\HH}:=&\int_{\mathbb R^{ d}}dx\int_{\mathbb R^{ d}}dy\ \phi(x)\psi(y)f(x-y) \nonumber\\
=&\int_{\mathbb R^d}\lambda(d\xi)\FF\phi(\xi)\overline{\FF \psi}(\xi), \ \ \ \ \phi,\psi\in \SS(\mathbb R^d),
\end{align}
here $\overline z$ is the conjugate of the complex number $z$. The norm induced by $\langle\cdot,\cdot\rangle_{\HH}$ is denoted by $\|\cdot\|_{\HH}$.

%Recall that $M_t(A)=F(t, A)$ defined by \eqref{eq M}.
%Let $\GG_t$ be the completion of the $\sigma$-field generated by the random variables $\{M_s(A);0\le s\le t, A\in \BB_b(\mathbb R^d)\}$.
%Then the process $\{M_t(A), \mathcal G_t, t\in[0,T], A\in \mathbb B_b(\mathbb R^d)\}$ defines a worthy martingale in the sense of Walsh \cite{Walsh}, whose covariance measure is determined by
%$$
%\langle M(A), M(B)\rangle_t=t\int_{A}\int_{B} f(x-y)dxdy.
%$$
%For any $\phi\in \mathcal S(\rr^d)$, it can be checked that
%\begin{equation*}
%M_t(\phi)=\int_0^t\int_{\rr^d}\phi(x) M(ds,dx),
%\end{equation*}
%where the integral on the right-hand side is Walsh's stochastic integral, see \cite{Dal}.
Recall that the predictable $\sigma$-field on $\Omega\times[0,T]$ is generated by the sets $\{(s,t]\times A;A\in\FF_s, 0\le s< t\le T\}$. There are two ways to define the stochastic integral against   time-white and  space-colored noise, see \cite{DQ} for details.

On the one hand,
as in R. Dalang \cite{Dal}, the Gaussian process $F$ can be extended to a worthy martingale measure in the sense of J. Walsh \cite{Walsh}.  By using the approximation technique, for any $\HH$-valued predictable process
 $g\in L^2(\Omega\times[0,T];\HH)$,  the stochastic integral
%  of $g$ with respect to $F$
% is defined by
\begin{equation}\label{eq int1}
%\int_0^t\int_{\rr^d}g(s, x) M(ds,dx)=:
\int_0^T\int_{\rr^d}g(s, x) F(ds,dx)
\end{equation}
is well-defined, see \cite{Dal}. Furthermore, the above Walsh integral can be defined for $g\in L^2([0,T];\HH)$, $\mathbb{P}$-a.s., by using localization techniques.

On the other hand, for any orthonormal basis $\{e_k\}_{k\ge1}$ of the Hilbert space $\HH$, the family of processes
\[
\left\{B_t^k:=\int_0^t\int_{\mathbb{R}^d}e_k(y)F(ds,dy), k\ge1\right\}
\]
are a sequence of independent standard Wiener processes and the process
$
B_t:=\sum_{k\ge1}B_t^ke_k
$
is a cylindrical Brownian motion on $\HH$.
% see \cite{DQ}.
%Define the predictable $\sigma$-field in $\Omega\times[0,T]$ generated by the sets $\{(s,t]\times A;A\in\FF_s, 0\le s\le t\le T\}$.
It is well-known that (see \cite{DPZ} or \cite{DQ}) for any $\HH$-valued predictable process
 $g\in L^2(\Omega\times[0,T];\HH)$,
 %for  any predictable square-integrable process $g$  valued in $\HH$,
we can define the stochastic integral with respect to the cylindrical Wiener process $B$ as follows:					
\begin{equation}\label{eq int2}
\int_0^T g(s)dB_s:=\sum_{k\ge1}\int_0^T \langle g(s),e_k\rangle_{\HH} dB_s^k.
\end{equation}
  Note that the above series converges in $L^2(\Omega, \FF,\pp)$ and the sum does not depend on
the selected orthonormal basis. Moreover, each summand, in the above series, is a classical It\^o
integral with respect to a standard Brownian motion.

By \cite[Proposition 2.6]{DQ}, we know that for any $\HH$-valued predictable process
 $g\in L^2(\Omega\times[0,T];\HH)$,
\begin{align}\label{190724.213830}
	\int_0^T \int_{\rr^d}g(s, x) F(ds,dx) = \int_0^T g(s)dB_s.
\end{align}

\vskip 1.0cm

\subsection{Moment estimates for the stochastic convolution under the uniform metric}

In this part, we will establish some moment estimates for the stochastic convolution driven by the time-white and  space-colored noise. This part is inspired by \cite{SZ} in the case of the space-time white noise.

For any random variable $\xi\in L^p(\Omega)$, let $\|\xi\|_{L^p(\Omega)}:=\left(\mathbb E |\xi|^p\right)^{\frac{1}{p}}$.

\begin{proposition}\label{estimates 0}
  Let $\{\sigma(s,y), (s,y)\in[0,T]\times [0,1]^d\}$ be a predictable random field
%   such that  $\sup_{(s,y)\in[0,T]\times [0,1]^d}\|\sigma(s,y)\|_{L^p(\Omega)}<\infty$ for some even integer $p\ge2$,
    and $\{p_s(x,y), (s,x,y)\in[0,T]\times [0,1]^d\times [0,1]^d\}$ be a deterministic function such that the following stochastic integral is well-defined.
    Then for any $t\in[0,T], x\in [0,1]^d$ and $p\geq 2$,
  \begin{align}\label{101.01}
    & \mathbb E\left[ \left|\int_0^t\int_{[0,1]^d} p_{t-s}(x,y)\sigma(s,y)F(ds,dy) \right|^p\right]\nonumber\\
    \leq & (4p)^{\frac{p}{2}}\left(\int_0^t \left\|p_{t-s}(x,\cdot) \|\sigma(s, \cdot)\|_{L^p(\Omega)}\right\|_{\HH}^2ds\right)^{\frac p 2}.
  \end{align}
\end{proposition}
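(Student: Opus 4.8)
The plan is to fix $t\in[0,T]$ and $x\in[0,1]^d$ and to regard the stochastic integral as the terminal value of a continuous martingale in the time variable. Writing $\Phi(s):=p_{t-s}(x,\cdot)\sigma(s,\cdot)$, viewed as an $\HH$-valued predictable process, the identity \eqref{190724.213830} lets me rewrite the Walsh integral as the cylindrical integral $\int_0^s\Phi(r)\,dB_r$, so that
$N_s:=\int_0^s\int_{[0,1]^d}p_{t-r}(x,y)\sigma(r,y)F(dr,dy)$, $s\in[0,t]$, is a continuous $L^2$-martingale. From the series representation \eqref{eq int2} and the independence of the standard Brownian motions $B^k$, its quadratic variation is $\langle N\rangle_s=\int_0^s\|\Phi(r)\|_\HH^2\,dr$, and the quantity to be bounded is $\mathbb E|N_t|^p$.

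Next I would apply the Burkholder--Davis--Gundy inequality to $N$. The precise factor $(4p)^{p/2}$ forces the use of a BDG constant of the sharp order $\sqrt p$, rather than the constant produced by the elementary It\^o-plus-Doob argument (which grows like $p^{p}$). Invoking the Carlen--Kree form of BDG, namely $\|N_t\|_{L^p(\Omega)}\le 2\sqrt p\,\big\|\langle N\rangle_t^{1/2}\big\|_{L^p(\Omega)}$ for $p\ge2$, and squaring, I obtain
\[
\|N_t\|_{L^p(\Omega)}^2\le 4p\,\Big\|\int_0^t\|\Phi(s)\|_\HH^2\,ds\Big\|_{L^{p/2}(\Omega)},
\]
since $(2\sqrt p)^p=(4p)^{p/2}$ and $\|\langle N\rangle_t^{1/2}\|_{L^p}^2=\|\langle N\rangle_t\|_{L^{p/2}}$.

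It remains to push the $L^{p/2}(\Omega)$-norm inside the time and space integrals. Because $\|\Phi(s)\|_\HH^2\ge0$ and $p/2\ge1$, Minkowski's integral inequality in $s$ gives $\big\|\int_0^t\|\Phi(s)\|_\HH^2\,ds\big\|_{L^{p/2}}\le\int_0^t\big\|\|\Phi(s)\|_\HH\big\|_{L^p}^2\,ds$. For the inner factor I would expand, using \eqref{190719.194202},
\[
\|\Phi(s)\|_\HH^2=\int_{\rr^d}\int_{\rr^d}p_{t-s}(x,y_1)\sigma(s,y_1)\,p_{t-s}(x,y_2)\sigma(s,y_2)\,f(y_1-y_2)\,dy_1\,dy_2,
\]
and observe that $f\ge0$, so this double integral is taken against a nonnegative measure. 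Bounding the signed integrand by its absolute value and applying Minkowski's integral inequality for $\|\cdot\|_{L^{p/2}(\Omega)}$ in the variables $(y_1,y_2)$, followed by the Cauchy--Schwarz inequality $\|\sigma(s,y_1)\sigma(s,y_2)\|_{L^{p/2}(\Omega)}\le\|\sigma(s,y_1)\|_{L^p(\Omega)}\|\sigma(s,y_2)\|_{L^p(\Omega)}$, yields $\big\|\|\Phi(s)\|_\HH\big\|_{L^p(\Omega)}^2\le\big\|\,p_{t-s}(x,\cdot)\,\|\sigma(s,\cdot)\|_{L^p(\Omega)}\big\|_\HH^2$, where the nonnegativity of the heat kernel $p$ lets me discard the absolute values. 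Collecting the three estimates produces exactly \eqref{101.01}.

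I expect the last step to be the main obstacle: interchanging the $L^p(\Omega)$-norm with the Hilbert norm $\|\cdot\|_\HH$. A naive reduction to the Fourier side via $\|\cdot\|_\HH=\|\mathcal F\cdot\|_{L^2(\lambda)}$ fails, because passing the $L^p(\Omega)$-norm through the Fourier transform destroys the oscillatory cancellation and breaks the estimate. The workable route is the double-integral representation of $\|\cdot\|_\HH^2$ together with the positivity $f\ge0$, which reduces the interchange to the scalar Minkowski and Cauchy--Schwarz inequalities above; keeping careful track of the signs of $p$ and using the sharp-order BDG constant is what makes the final constant come out as the clean $(4p)^{p/2}$.
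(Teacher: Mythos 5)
Your proposal is correct and follows essentially the same route as the paper: a Burkholder--Davis--Gundy bound with the sharp-order constant $(2\sqrt{p})^p=(4p)^{p/2}$, followed by Minkowski's integral inequality and Cauchy--Schwarz to pull the $L^p(\Omega)$-norm inside the double integral against the nonnegative kernel $p_{t-s}(x,y)p_{t-s}(x,z)f(y-z)$, recognizing the result as $\|p_{t-s}(x,\cdot)\|\sigma(s,\cdot)\|_{L^p(\Omega)}\|_{\HH}^2$. The only cosmetic difference is that you rederive the BDG step through the cylindrical Wiener representation and Carlen--Kree, whereas the paper simply cites the Burkholder inequality for Walsh integrals (\cite[Proposition 4.4]{K}, \cite[p.~14]{Dal}), which already contains the constant $(4p)^{p/2}$ and the quadratic-variation identity you compute by hand.
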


\begin{proof}
By Burkholder's inequality (see \cite[Proposition 4.4]{K} and \cite[Page 14]{Dal}), we have
\begin{align}
&\mathbb E  \left|\int_0^t\int_{[0,1]^d} p_{t-s}(x,y)\sigma(s,y)F(ds,dy) \right|^p \nonumber\\
\le & (4p)^{\frac{p}{2}}\mathbb E \left|\int_0^tds \int_{[0,1]^{d}}dy \int_{[0,1]^{d}}dz \  p_{t-s}(x, y)p_{t-s}(x, z)f(y-z)\sigma(s,y)\sigma(s,z)\right|^{p/2} .
\end{align}
Taking $\frac{2}{p}$-th power on both sides of the above inequality and using H\"older's inequality, we get
\begin{align}
& \left\Vert \int_0^t\int_{[0,1]^d} p_{t-s}(x,y)\sigma(s,y)F(ds,dy) \right\Vert_{L^p(\Omega)}^2 \nonumber\\
\le & 4p \left\Vert \int_0^tds \int_{[0,1]^{d}}dy \int_{[0,1]^{d}}dz\  p_{t-s}(x, y)p_{t-s}(x, z)f(y-z)\sigma(s,y)\sigma(s,z)\right\Vert_{L^{\frac{p}{2}}(\Omega)} \nonumber\\
\le & 4p \int_0^tds \int_{[0,1]^{d}}dy \int_{[0,1]^{d}}dz\  p_{t-s}(x, y)p_{t-s}(x, z)f(y-z) \left\Vert \sigma(s,y)\sigma(s,z)\right\Vert_{L^{\frac{p}{2}}(\Omega)} \nonumber\\
\le &  4p \int_0^tds \int_{[0,1]^{d}}dy \int_{[0,1]^{d}}dz\  p_{t-s}(x, y)p_{t-s}(x, z)f(y-z) \left\Vert \sigma(s,y)\right\Vert_{L^p(\Omega)}\left\Vert\sigma(s,z)\right\Vert_{L^{p}(\Omega)} \nonumber\\
= & 4p \int_0^t \left\|p_{t-s}(x,\cdot) \|\sigma(s, \cdot)\|_{L^p(\Omega)}\right\|_{\HH}^2ds .
\end{align}
Then we take $\frac{p}{2}$-th power on both sides of the above inequality to obtain (\ref{101.01}).
The proof is complete.
\end{proof}

\begin{proposition}\label{estimates 1}
Suppose hypothesis $(H_{\eta})$ holds. Let $\{\sigma(s,y), (s,y)\in\mathbb{R}_+\times [0,1]^d\}$ be a random field such that the stochastic integral against the time-white and space-colored noise $F$ in (\ref{101.1}) is well-defined. Then for any $T>0$ and  $p>\frac{4+d}{1-\eta}$, there exists a constant $C_{T,p,\eta}>0$ such that
  \begin{align}\label{101.1}
    & \mathbb E\left[\sup_{(t,x)\in[0,T]\times[0,1]^d}\left|\int_0^t\int_{[0,1]^d} G_{t-s}(x,y)\sigma(s,y)F(ds,dy) \right|^p\right] \nonumber\\
    \leq & C_{T,p,\eta} \int_0^T  \sup_{ y\in [0,1]^d}\mathbb E\left|\sigma(s,y)\right|^p\,ds .
  \end{align}

\end{proposition}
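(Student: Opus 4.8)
The plan is to use the factorization method (see, e.g., \cite{DPZ}), which converts the supremum over the two-parameter set $[0,T]\times[0,1]^d$ into a pointwise-in-time moment bound composed with a deterministic smoothing estimate. This route avoids any two-parameter Kolmogorov argument and, crucially, avoids ever estimating the \emph{signed} kernel differences $G_{t-s}(x,\cdot)-G_{t-s}(x',\cdot)$, out of whose $\HH$-norm the weight $\sigma$ cannot be pulled. Fix $\alpha\in(0,1)$ to be chosen later. Using the semigroup identity
$$G_{t-s}(x,y)=\frac{\sin(\pi\alpha)}{\pi}\int_s^t (t-r)^{\alpha-1}(r-s)^{-\alpha}\int_{[0,1]^d}G_{t-r}(x,z)G_{r-s}(z,y)\,dz\,dr$$
together with the stochastic Fubini theorem, I would rewrite the stochastic convolution as
$$\int_0^t\int_{[0,1]^d} G_{t-s}(x,y)\sigma(s,y)F(ds,dy)=\frac{\sin(\pi\alpha)}{\pi}\int_0^t (t-r)^{\alpha-1}\int_{[0,1]^d}G_{t-r}(x,y)J_\alpha(r,y)\,dy\,dr,$$
where $J_\alpha(r,y):=\int_0^r\int_{[0,1]^d}(r-s)^{-\alpha}G_{r-s}(y,z)\sigma(s,z)F(ds,dz)$ is again a stochastic convolution, now with a \emph{non-negative} kernel.

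Next I would treat the deterministic operator $R_\alpha g(t,x):=\int_0^t(t-r)^{\alpha-1}\int_{[0,1]^d}G_{t-r}(x,y)g(r,y)\,dy\,dr$. H\"older's inequality in $(r,y)$ with exponents $p'=p/(p-1)$ and $p$ gives
$$\sup_{(t,x)\in[0,T]\times[0,1]^d}|R_\alpha g(t,x)|\le \Lambda\,\|g\|_{L^p([0,T]\times[0,1]^d)},\qquad \Lambda:=\sup_{t,x}\Big(\int_0^t\int_{[0,1]^d}\big[(t-r)^{\alpha-1}G_{t-r}(x,y)\big]^{p'}dy\,dr\Big)^{1/p'}.$$
Invoking the Gaussian upper bound $G_\tau(x,y)\le C\tau^{-d/2}\exp(-c|x-y|^2/\tau)$ for the Dirichlet heat kernel, one computes $\int_{[0,1]^d}G_\tau(x,y)^{p'}dy\le C\tau^{-d(p'-1)/2}$, whence $\Lambda<\infty$ precisely when $\alpha>\tfrac{d+2}{2p}$; the same $L^{p'}$ bound shows $R_\alpha$ maps $L^p$ into $C([0,T]\times[0,1]^d)$, so the convolution has a continuous version. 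Taking $g=J_\alpha$ and expectations then yields
$$\mathbb E\Big[\sup_{t,x}\Big|\int_0^t\int G_{t-s}(x,y)\sigma(s,y)F(ds,dy)\Big|^p\Big]\le C\int_0^T\int_{[0,1]^d}\mathbb E|J_\alpha(r,y)|^p\,dy\,dr.$$

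For the stochastic part I would estimate $\mathbb E|J_\alpha(r,y)|^p$ through Proposition \ref{estimates 0}. Since the kernel $(r-s)^{-\alpha}G_{r-s}(y,\cdot)$ is non-negative and $f\ge0$, the weight can be pulled out of the $\HH$-norm: $\big\|(r-s)^{-\alpha}G_{r-s}(y,\cdot)\,\|\sigma(s,\cdot)\|_{L^p(\Omega)}\big\|_\HH^2\le (r-s)^{-2\alpha}\sup_z\|\sigma(s,z)\|_{L^p(\Omega)}^2\,\|G_{r-s}(y,\cdot)\|_\HH^2$. The decisive kernel bound is $\|G_\tau(y,\cdot)\|_\HH^2\le C\tau^{-\eta}$: extending $G_\tau(y,\cdot)$ by zero and using the domination $0\le G_\tau\le$ (free heat kernel) together with $f\ge0$, this reduces to $\int_{\rr^d}e^{-c\tau|\xi|^2}\lambda(d\xi)\le C\tau^{-\eta}K_\eta$, which follows from $(H_{\eta})$ and the elementary scaling bound $\sup_\xi e^{-c\tau|\xi|^2}(1+|\xi|^2)^{\eta}\le C\tau^{-\eta}$. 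Substituting these and applying H\"older in time with the conjugate pair $(p/2,\,p/(p-2))$ isolates the factor $\int_0^r\sup_z\mathbb E|\sigma(s,z)|^p\,ds$, at the price of requiring $\int_0^r(r-s)^{-(2\alpha+\eta)p/(p-2)}ds$ to converge, i.e. $\alpha<\tfrac{1-\eta}{2}-\tfrac1p$. Integrating in $(r,y)$ over $[0,T]\times[0,1]^d$ produces the claimed bound $C_{T,p,\eta}\int_0^T\sup_z\mathbb E|\sigma(s,z)|^p\,ds$.

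The two constraints $\tfrac{d+2}{2p}<\alpha$ and $\alpha<\tfrac{1-\eta}2-\tfrac1p$ admit a common value exactly when $\tfrac{d+2}{2p}<\tfrac{1-\eta}2-\tfrac1p$, i.e. when $p>\tfrac{4+d}{1-\eta}$, which is precisely the hypothesis; one then fixes any admissible $\alpha$. I expect the main obstacle to be the balancing of these exponents: the lower bound on $\alpha$ encodes the spatial smoothing needed to dominate the supremum (through the Gaussian bound in dimension $d$), while the upper bound encodes the temporal singularity of the colored-noise covariance (through $(H_{\eta})$ and the H\"older step of exponent $p/2$), and it is exactly their compatibility that forces the threshold $(4+d)/(1-\eta)$. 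The auxiliary technical points — justifying the stochastic Fubini step, the continuity of $R_\alpha g$, and the heat-kernel domination and Gaussian bounds on the bounded domain $[0,1]^d$ — are standard but would need to be checked.
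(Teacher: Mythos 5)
Your proposal is correct and follows essentially the same route as the paper's own proof: the factorization method with the identical admissible window $\frac{d+2}{2p}<\alpha<\frac{1}{2}-\frac{1}{p}-\frac{\eta}{2}$ (compatible exactly when $p>\frac{4+d}{1-\eta}$), Proposition \ref{estimates 0} together with the non-negativity of the kernel and of $f$ to pull $\sup_z\|\sigma(s,z)\|_{L^p(\Omega)}$ out of the $\HH$-norm, and the bound $\|G_\tau(y,\cdot)\|_{\HH}^2\lesssim \tau^{-\eta}$ derived from $(H_{\eta})$. The only cosmetic difference is in the deterministic step: you apply a single H\"older inequality in $(r,y)$ with the $L^{p'}$ heat-kernel bound, whereas the paper chains Jensen and Cauchy--Schwarz in space using $\int_{[0,1]^d}G_\tau(x,y)^2\,dy\le(4\pi)^{-d/2}\tau^{-d/2}$ before applying H\"older in time, both yielding the same constraint $\alpha>\frac{d+2}{2p}$.
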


    \begin{proof} The proof is based on the factorization method, which is  inspired by   \cite{SZ}. We shall give its proof for the completeness.
Obviously, we can assume that the right hand side of (\ref{101.1}) is finite.
 Choose $\alpha$ such that  $\frac{d+2}{2p}<\alpha<\frac{1}{2}-\frac{1}{p}-\frac{\eta}{2}$. This is possible because $p>\frac{4+d}{1-\eta}$.

 Let
\begin{align}
  (J_{\alpha}\sigma)(s,y):&= \int_0^s\int_{[0,1]^d}(s-r)^{-\alpha}G_{s-r}(y,z)\sigma(r,z)F(dr,dz), \\
  (J^{\alpha-1}f)(t,x):&= \frac{\sin(\pi\alpha)}{\pi}\int_0^t\int_{[0,1]^d}(t-s)^{\alpha-1}G_{t-s}(x,y)f(s,y)dsdy.
\end{align}
  By the stochastic  Fubini theorem,  for any $(t,x)\in\mathbb{R}_+\times[0,1]^d$,
\begin{align}\label{103.1}
  \int_0^t\int_{[0,1]^d}G_{t-s}(x,y)\sigma(s,y)F(ds,dy)=J^{\alpha-1}(J_{\alpha}\sigma)(t,x), \quad \mathbb P\text{-a.s.}.
\end{align}

\noindent Therefore
\begin{align}\label{103.11}
  & \sup_{(t,x)\in[0,T]\times[0,1]^d}\left|\int_0^t\int_{[0,1]^d}G_{t-s}(x,y)\sigma(s,y)F(ds,dy)\right| \nonumber\\
  =& \sup_{(t,x)\in[0,T]\times[0,1]^d}\left|J^{\alpha-1}(J_{\alpha}\sigma)(t,x)\right|, \quad \mathbb P\text{-a.s.}.
\end{align}
%By (\ref{190719.1930.11}) in Appendix,  we know that
%\begin{align}
%\label{103.2}  \int_{[0,1]^d} G_t(x,y)^2\,dy =& \sup_{y\in[0,1]^d}G_t(x,y)\times \int_{[0,1]^d} G_t(x,y)\,dy \leq C_2 t^{-\frac{d}{2}},
%\end{align}
%where
%\begin{align}\label{C_2}
%C_2 = (4\pi)^{-\frac{d}{2}} .
%\end{align}
By H\"{o}ler's inequality, (\ref{103.11}) and (\ref{103.2}) in Appendix, we have
{\allowdisplaybreaks\begin{align}\label{104.1}
  & \mathbb E\sup_{(t,x)\in[0,T]\times[0,1]^d}\left|\int_0^t\int_{[0,1]^d}G_{t-s}(x,y)\sigma(s,y)F(ds,dy)\right|^p \nonumber\\
  =& \mathbb E\sup_{(t,x)\in[0,T]\times[0,1]^d}\left|\frac{\sin(\pi\alpha)}{\pi} \int_0^t\int_{[0,1]^d} (t-s)^{\alpha-1} G_{t-s}(x,y)J_{\alpha}\sigma(s,y)\,ds dy \right|^p \nonumber\\
  \leq & \left|\frac{\sin(\pi\alpha)}{\pi}\right|^p \mathbb E\sup_{(t,x)\in[0,T]\times[0,1]^d}\bigg\{\int_0^t (t-s)^{\alpha-1}
\times\left(\int_{[0,1]^d} G_{t-s}(x,y)|J_{\alpha}\sigma(s,y)|\,dy\right)ds\bigg\}^p \nonumber\\
  \leq & \left|\frac{\sin(\pi\alpha)}{\pi}\right|^p \mathbb E\sup_{(t,x)\in[0,T]\times[0,1]^d}\Bigg\{\int_0^t (t-s)^{\alpha-1}
\times\left(\int_{[0,1]^d} G_{t-s}(x,y)|J_{\alpha}\sigma(s,y)|^{\frac{p}{2}}\, dy \right)^{\frac{2}{p}}ds\Bigg\}^p \nonumber\\
  \leq & \left|\frac{\sin(\pi\alpha)}{\pi}\right|^p \mathbb E\sup_{(t,x)\in[0,T]\times[0,1]^d}\Bigg\{\int_0^t (t-s)^{\alpha-1}  \nonumber\\
  &~~~~~~~~~~~~~~~~~~~~~~~~\times\left(\int_{[0,1]^d} G_{t-s}(x,y)^2\,dy\right)^{\frac{1}{p}}\left(\int_{[0,1]^d}|J_{\alpha}\sigma(s,y)|^p\,dy\right)^{\frac{1}{p}}\,ds\Bigg\}^p \nonumber\\
  \leq & \left|\frac{\sin(\pi\alpha)}{\pi}\right|^p (4\pi)^{-\frac d2}\mathbb E\sup_{t\in[0,T]}\left\{\int_0^t (t-s)^{\alpha-1-\frac{d}{2p}}\left(\int_{[0,1]^d}|J_{\alpha}\sigma(s,y)|^p\,dy\right)^{\frac{1}{p}}\,ds\right\}^p \nonumber\\
  \leq & \left|\frac{\sin(\pi\alpha)}{\pi}\right|^p (4\pi)^{-\frac d2} \mathbb E\sup_{t\in[0,T]}\Bigg[\left(\int_0^t (t-s)^{(\alpha-1-\frac{d}{2p})\frac{p}{p-1}}\,ds\right)^{p-1} \times\left(\int_0^t\int_{[0,1]^d}|J_{\alpha}\sigma(s,y)|^p\,dyds\right)\Bigg] \nonumber\\
  \leq & \left|\frac{\sin(\pi\alpha)}{\pi}\right|^p (4\pi)^{-\frac d2} \times\left(\int_0^T s^{(\alpha-1-\frac{d}{2p})\frac{p}{p-1}}\,ds\right)^{p-1}\times\int_0^T\int_{[0,1]^d}\mathbb E|J_{\alpha}\sigma(s,y)|^p\,dyds \nonumber\\
  \leq & C_{T,p,\alpha}^{\prime}\sup_{(s,y)\in[0,T]\times[0,1]^d} \mathbb E\left|\int_0^s\int_{[0,1]^d} (s-r)^{-\alpha}G_{s-r}(y,z)\sigma(r,z)F(dr,dz)\right|^p ,
\end{align}}
where we have used the condition $\alpha >\frac{ d}{ 2p}+\frac{1}{p}$, so that
\begin{align}\label{C_{T,p} prime}
  C_{T,p,\alpha}^{\prime}= & \left|\frac{\sin(\pi\alpha)}{\pi}\right|^p  (4\pi)^{-\frac d2} \times\left(\int_0^T s^{(\alpha-1-\frac{d}{2p})\frac{p}{p-1}}\,ds\right)^{p-1}\times T \nonumber\\
  =& \left|\frac{\sin(\pi\alpha)}{\pi}\right|^p {(4\pi)}^{-\frac d2} \left(\frac{p-1}{\alpha p-1-\frac{d}{2}}\right)^{p-1} T^{\alpha p-\frac{d}{2}} < \infty.
\end{align}

% Applying the BDG inequality \eqref{101.01}, \eqref{103.2} and H\"older inequality, we have
\noindent By H\"older's inequality,  \eqref{101.01} and Lemma \ref{Lem t} in Appendix, we have
\begin{align}\label{104.2}
% &  \left|\int_0^s\int_{[0,1]^d} (s-r)^{-\alpha}G_{s-r}(y,z)\sigma(r,z)F(dr,dz)\right|^2_{L^p(\Omega)} \nonumber\\
% \leq & C_3 \int_0^s(s-r)^{-2\alpha}dr\int_{[0,1]^d}dz_1\int_{[0,1]^d}dz_2G_{s-r}(y,z_1)G_{s-r}(y,z_2)  f(z_1-z_2) \nonumber\\
% &\ \ \ \ \ \ \times \left\Vert\sigma(r,z_1)\right\Vert_{L^{p}(\Omega)}  \left\Vert\sigma(r,z_2)\right\Vert_{L^{p}(\Omega)}  \nonumber\\
% \leq & C_3\int_0^s(s-r)^{-2\alpha}   \|G_{s-r}\|_{\HH}^2\cdot\sup_{z\in[0,1]^d}\left\Vert\sigma(r,z)\right\Vert_{L^p(\Omega)}^2\,dr  \nonumber\\
% \leq & C_3\left(\int_0^s(s-r)^{-\frac{2\alpha p}{p-2}}   \|G_{s-r}\|_{\HH}^2  dr\right)^{\frac{p-2}{p}}\times \left(\int_0^s \|G_{s-r}\|_{\HH}^2\cdot \sup_{z\in[0,1]^d}   \left\Vert   \sigma(r,z)\right\Vert_{L^p(\Omega)}^p\,dr\right)^{\frac{2}{p}}   \nonumber\\
%     \leq & C''_{T, \alpha, p} \left(\int_0^s \|G_{s-r}\|_{\HH}^2\cdot \sup_{z\in[0,1]^d}   \left\Vert   \sigma(r,z)\right\Vert_{L^p(\Omega)}^p\,dr\right)^{\frac{2}{p}},
 &  \mathbb{E}\left|\int_0^s\int_{[0,1]^d} (s-r)^{-\alpha}G_{s-r}(y,z)\sigma(r,z)F(dr,dz)\right|^p \nonumber\\
 \leq & (4p)^{\frac{p}{2}} \left(\int_0^s(s-r)^{-2\alpha} \cdot \|G_{s-r}\|_{\HH}^2  \cdot\sup_{z\in[0,1]^d}\left\Vert\sigma(r,z)\right\Vert_{L^p(\Omega)}^2\,dr \right)^{\frac{p}{2}} \nonumber\\
 \leq & (4p)^{\frac{p}{2}}
 \left(\int_0^s \left[(s-r)^{-2\alpha}  \|G_{s-r}\|_{\HH}^2\right ]^{\frac{p}{p-2}}     dr\right)^{\frac{p-2}{2}}
  \times \left(\int_0^s \sup_{z\in[0,1]^d}   \left\Vert   \sigma(r,z)\right\Vert_{L^p(\Omega)}^p\,dr\right)  \nonumber\\
 \leq & (4p)^{\frac{p}{2}}
 \left(\int_0^s \left[(s-r)^{-2\alpha} \times K_{\eta} \left(1\vee \left(\frac{\eta}{8\pi^2}\right)^{\eta} (s-r)^{-\eta}\right)\right]^{\frac{p}{p-2}}     dr\right)^{\frac{p-2}{2}} \nonumber\\
 &\times \left(\int_0^s \sup_{z\in[0,1]^d}   \left\Vert   \sigma(r,z)\right\Vert_{L^p(\Omega)}^p\,dr\right)   \nonumber\\
\leq & C''_{T,  p, \alpha, \eta} \int_0^s  \sup_{z\in[0,1]^d}   \mathbb{E}\left|   \sigma(r,z)\right|^p\,dr, \end{align}
where we have used  the condition $\alpha<\frac{1}{2}-\frac{1}{p}-\frac{\eta}{2}$, so that
 \begin{align}\label{190719.1723}
 &C''_{T,  p, \alpha, \eta}\nonumber\\
 = & (4p K_{\eta})^{\frac{p}{2}}\sup_{s\in[0,T]}\left(\int_0^s(s-r)^{-\frac{2\alpha p}{p-2}} \left[1\vee  \left(\frac{\eta}{8\pi^2}\right)^{\eta}(s-r)^{ -\frac{\eta p}{p-2}}\right]  dr\right)^{\frac{p-2}{2}} \nonumber\\
 \leq & (4p K_{\eta})^{\frac{p}{2}}
 \left(\frac{p-2}{p-2-2\alpha p}  T^{\frac{p-2-2\alpha p}{p-2}}+ \left(\frac{\eta}{8\pi^2}\right)^{\eta}\frac{p-2}{p-2-2\alpha p- \eta p} T^{\frac{p-2-2\alpha p-\eta p}{p-2}} \right)^{\frac{p-2}{2}} \nonumber\\
 \le & \frac{1}{4} (8pK_{\eta})^{\frac{p}{2}} \left[\left(\frac{p-2}{p-2-2\alpha p}\right)^{\frac{p-2}{2}} T^{\frac{p}{2}-1-\alpha p} + \left(\frac{\eta}{8\pi^2}\right)^{\frac{\eta(p-2)}{2}} \left(\frac{p-2}{p-2-2\alpha p -\eta p}\right)^{\frac{p-2}{2}} T^{\frac{p}{2}-1-\alpha p -\frac{\eta p}{2}}\right] \nonumber\\
 < & \infty.
 \end{align}

\noindent Combining (\ref{104.1}) with (\ref{104.2}), we obtain
\begin{align}
  & \mathbb E\sup_{(t,x)\in[0,T]\times[0,1]^d}\left|\int_0^t\int_{[0,1]^d} G_{t-s}(x,y)\sigma(s,y)F(ds,dy)\right|^p \nonumber\\
  \leq & C_{T,p,\eta} \int_0^T  \sup_{z\in [0,1]^d}\mathbb E\left|\sigma(s,z)\right|^p\,ds ,
\end{align}
where
\begin{align}\label{C_{T,p}}
  C_{T,p,\eta}= \min_{\frac{d+2}{2p}<\alpha<\frac{1}{2}-\frac{1}{p}-\frac{\eta}{2}}C^{\prime}_{T,p,\alpha}\times C^{\prime\prime}_{T,p,\alpha, \eta} .
\end{align}
In view of (\ref{C_{T,p} prime}) and (\ref{190719.1723}), a straightforward calculation leads to
\begin{align}\label{eq c t p}
  C_{T,p, \eta} < & p^{\frac{p}{2} } \times \frac{1}{4}\left(\frac{1}{4\pi}\right)^{\frac{d}{2}}\left(\frac{\sqrt{8K_{\eta}}}{\pi}\right)^p \nonumber \\
   & \times \Bigg[\max\left\{\left(\frac{3p-4}{p-4-d}\right)^{\frac{3p}{2}-2},  \left( \frac{2(p-1)}{(1-\eta)p-4-d}\right)^{p-1} \left(\frac{p-2}{p\eta}\right)^{\frac p2-1} \right\}  T^{\frac{p}{2}-1-\frac{d}{2}}\nonumber\\
    &\ \ \ +\left(\frac{\eta}{8\pi^2}\right)^{\frac{\eta(p-2)}{2}} \left(\frac{3p-4}{(1-\eta)p-4-d}\right)^{\frac{3p}{2}-2} T^{\frac{(1-\eta)p}{2}-1-\frac{d}{2}}\Bigg] .
\end{align}
This completes the proof of the estimate (\ref{101.1}).
\end{proof}

Based on Proposition \ref{estimates 1}, we can obtain   the following estimate by using the argument in the proof of  \cite[Proposition 3.4 ]{SZ}. The proof is omitted here.

\begin{proposition}\label{estimates 2}
Suppose hypothesis $(H_{\eta})$ holds.
Let $\{\sigma(s,y), (s,y)\in\mathbb{R}_+\times [0,1]^d\}$ be as in Proposition \ref{estimates 1}.
Then for any $T>0$, $\varepsilon>0$ and $0 < p\leq \frac{4+d}{1-\eta}$, there exists a constant $C_{T,p,\eta,\varepsilon}$ such that
  \begin{align}\label{101.2}
    & \mathbb{E}\left[\sup_{(t,x)\in[0,T]\times[0,1]^d}\left|\int_0^t\int_{[0,1]^d} G_{t-s}(x,y)\sigma(s,y)F(ds,dy) \right|^p\right] \nonumber\\
    \leq & \varepsilon  \mathbb{E}\left[\sup_{(s,y)\in[0,T]\times[0,1]^d}\left|\sigma(s,y) \right|^p\right] + C_{T,p,\eta,\varepsilon} \mathbb{E}\int_0^T \sup_{y\in[0,1]^d}\left|\sigma(s,y)\right|^p\,ds,
  \end{align}
where
 \begin{align}\label{101.3}
  C_{T,p,\eta,\varepsilon}=\inf_{q>\frac{4+d}{1-\eta} }\left\{\left(1+ \frac{q C_{T, p,\eta}}{q-p}  \right) pq^{-\frac{q}{p}}\left(q-p+q C_{T,p, \eta}
  \right)^{\frac{q}{p}-1} \varepsilon^{\frac{q}{p}-1}  \right\},
\end{align}
and the constant $C_{T, p,\eta}$ is bounded by the right hand side of \eqref{eq c t p}.

\end{proposition}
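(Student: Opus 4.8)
The difficulty is that for $0<p\le\frac{4+d}{1-\eta}$ the admissible range $\frac{d+2}{2p}<\alpha<\frac12-\frac1p-\frac\eta2$ for the factorization parameter in Proposition \ref{estimates 1} is empty, so that estimate cannot be invoked at the exponent $p$ itself. The plan is to borrow integrability from a larger exponent: fix an auxiliary $q>\frac{4+d}{1-\eta}$, apply Proposition \ref{estimates 1} at exponent $q$, and then interpolate the resulting high moment of the supremum down to the $p$-th moment, absorbing the loss into the adjustable term $\varepsilon\,\mathbb{E}[\sup_{(s,y)}|\sigma(s,y)|^p]$. I may assume from the start that the right-hand side of \eqref{101.2} is finite.

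Write $M:=\sup_{(t,x)\in[0,T]\times[0,1]^d}\big|\int_0^t\int_{[0,1]^d}G_{t-s}(x,y)\sigma(s,y)F(ds,dy)\big|$, $S:=\sup_{(s,y)\in[0,T]\times[0,1]^d}|\sigma(s,y)|$ and $I:=\int_0^T\sup_{y}|\sigma(s,y)|^p\,ds$. Applying Proposition \ref{estimates 1} at exponent $q$ and using $\sup_{y}\mathbb{E}|\sigma(s,y)|^q\le\mathbb{E}\sup_{y}|\sigma(s,y)|^q$ gives
\[
\mathbb{E}[M^q]\le C_{T,q,\eta}\,\mathbb{E}\int_0^T\sup_{y\in[0,1]^d}|\sigma(s,y)|^q\,ds\le C_{T,q,\eta}\,\mathbb{E}\big[S^{q-p}I\big],
\]
the last step using $\sup_y|\sigma(s,y)|^q\le S^{q-p}\sup_y|\sigma(s,y)|^p$ and integrating in $s$.

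For the descent I would split $\mathbb{E}[M^p]$ at a random level proportional to $S$. On $\{M\le\lambda S\}$ one has $M^p\le\lambda^p S^p$ pathwise, contributing $\lambda^p\mathbb{E}[S^p]$, and the choice $\lambda^p=\varepsilon$ produces exactly the term $\varepsilon\,\mathbb{E}[\sup_{(s,y)}|\sigma|^p]$. On $\{M>\lambda S\}$ one has $M^p\le\lambda^{-(q-p)}M^qS^{-(q-p)}$; the same effect is obtained more robustly from the weighted Young inequality $M^p\le\frac pq Z^{-(q-p)}M^q+\frac{q-p}qZ^p$ with $Z=\mu S$, which avoids keeping $S$ under the stochastic integral. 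Combining this with the moment bound above and redistributing the factor $S^{q-p}$ against $S^p$ and $I$ by a further Young step yields a bound of the shape $\varepsilon\,\mathbb{E}[S^p]+(\text{const})\,\mathbb{E}[I]$, and optimizing the free parameter $\mu$ and then $q>\frac{4+d}{1-\eta}$ produces the constant $C_{T,p,\eta,\varepsilon}$ of \eqref{101.3}, with $C_{T,q,\eta}$ controlled through \eqref{eq c t p}.

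The step I expect to be the crux is the tail/high-moment contribution: the naive pointwise division produces the weighted moment $\mathbb{E}[M^qS^{-(q-p)}]$, which cannot be handled directly because $S$ is not $\{\mathcal F_t\}$-adapted and so cannot be pulled out of the stochastic integral. The resolution is to route the entire splitting through the Young inequality above, so that only the unweighted $\mathbb{E}[M^q]\le C_{T,q,\eta}\mathbb{E}[S^{q-p}I]$ is needed, and then to track the constants carefully through the two Young applications and the infimum over $q$; this bookkeeping is precisely what yields the intricate expression \eqref{101.3}. The remaining ingredients — the reduction $\sup_y\mathbb{E}|\sigma|^q\le\mathbb{E}\sup_y|\sigma|^q$, finiteness, and the predictability requirements — are routine, exactly as in the space-time white-noise case of \cite{SZ}.
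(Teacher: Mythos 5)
Your reduction to Proposition \ref{estimates 1} at an auxiliary exponent $q>\frac{4+d}{1-\eta}=:q_0$ is the right starting point, and you have correctly located the crux; but your resolution of it is circular, so the proof has a genuine gap. The weighted Young inequality $M^p\le\frac pq Z^{-(q-p)}M^q+\frac{q-p}qZ^p$ with $Z=\mu S$ gives, after taking expectations, exactly $\frac pq\mu^{-(q-p)}\,\mathbb{E}\bigl[M^qS^{-(q-p)}\bigr]+\frac{q-p}q\mu^p\,\mathbb{E}[S^p]$ --- i.e.\ it reproduces the very weighted moment you declared intractable, rather than avoiding it; and if instead $Z$ is deterministic, the second Young term is a constant, not $\varepsilon\,\mathbb{E}[S^p]$. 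No ``further Young step'' can repair this, because the ingredient set you restrict yourself to --- the single unstopped bound $\mathbb{E}[M^q]\le C_{T,q,\eta}\,\mathbb{E}[S^{q-p}I]$ for $q>q_0$, plus pointwise inequalities valid for arbitrary nonnegative random variables --- is provably insufficient. Indeed (normalizing $C_{T,q,\eta}\ge1$, which only weakens the hypotheses you use), fix an event $A$ with $\mathbb{P}(A)=\rho$, put $S=K\mathbf{1}_A$, $I=\delta K^p\mathbf{1}_A$ and $M\equiv c$ with $c:=K\inf_{q>q_0}\bigl(C_{T,q,\eta}\,\delta\rho\bigr)^{1/q}\ge K(\delta\rho)^{1/q_0}$. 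Then $\mathbb{E}[M^q]=c^q\le C_{T,q,\eta}\,\delta K^q\rho=C_{T,q,\eta}\,\mathbb{E}[S^{q-p}I]$ for every $q>q_0$, while for $p<q_0$,
\begin{equation*}
\mathbb{E}[M^p]\ \ge\ K^p(\delta\rho)^{p/q_0}\ >\ \varepsilon K^p\rho+C\,\delta K^p\rho\ =\ \varepsilon\,\mathbb{E}[S^p]+C\,\mathbb{E}[I]
\end{equation*}
once $\rho$ is small, since $\rho^{p/q_0}\gg\rho$. So \eqref{101.2} is not a formal consequence of these ingredients; any correct proof must use more structure of the stochastic convolution than the one global moment inequality.

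The missing idea is stopping-time localization, which is what the argument invoked by the paper (\cite[Proposition 3.4]{SZ}) runs on. Since $\sigma\mathbf{1}_{[0,\tau]}$ is still predictable for every stopping time $\tau\le T$, Proposition \ref{estimates 1} applies to it and yields a \emph{domination} property: with $X_t:=\sup_{r\le t,\,x}\bigl|\int_0^r\int_{[0,1]^d}G_{r-s}(x,y)\sigma(s,y)F(ds,dy)\bigr|^{q}$ and $A_t:=C_{T,q,\eta}\int_0^t\sup_y|\sigma(s,y)|^q\,ds$, one has $\mathbb{E}[X_\tau]\le\mathbb{E}[A_\tau]$ for \emph{every} stopping time $\tau\le T$, because $X_\tau$ involves only the convolution of the stopped field. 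Lenglart's domination inequality (equivalently, a good-$\lambda$ argument with the stopping times $\inf\{t:A_t\ge\lambda\}$) then converts this family of inequalities into
\begin{equation*}
\mathbb{E}[M^p]\;=\;\mathbb{E}\Bigl[\bigl(\sup_{t\le T}X_t\bigr)^{p/q}\Bigr]\;\le\;\frac{2q-p}{q-p}\,C_{T,q,\eta}^{p/q}\;
\mathbb{E}\Bigl[\Bigl(\int_0^T\sup_y|\sigma(s,y)|^q\,ds\Bigr)^{p/q}\Bigr],
\end{equation*}
and only now --- with no $M$ left on the right-hand side and the exponent $p/q<1$ sitting outside the time integral --- does your pathwise step become legitimate:
\begin{equation*}
\Bigl(\int_0^T\sup_y|\sigma(s,y)|^q\,ds\Bigr)^{p/q}\le S^{\frac{(q-p)p}{q}}I^{\frac pq}\le \frac{q-p}{q}\,\theta^{\frac{q}{q-p}}S^p+\frac pq\,\theta^{-\frac qp}I .
\end{equation*}
Choosing $\theta$ so that the $S^p$-coefficient equals $\varepsilon$ and then optimizing over $q>q_0$ produces a constant of precisely the shape \eqref{101.3}, proportional to $\varepsilon^{-(\frac qp-1)}$, with $C_{T,q,\eta}$ controlled by \eqref{eq c t p}; note in passing that \eqref{101.3} as printed must contain typos ($\varepsilon^{\frac qp-1}$ should be $\varepsilon^{1-\frac qp}$, since the constant must blow up as $\varepsilon\downarrow0$, and $C_{T,p,\eta}$ should read $C_{T,q,\eta}$). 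It is exactly this stopped form of Proposition \ref{estimates 1} --- absent from your scheme --- that defeats the counterexample above and makes the descent from the exponent $q$ to $p$ possible.
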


\vskip 1.5cm

\section{Transportation inequality}

\subsection{The main results}
Let  $E:=\mathcal C_0([0,1]^d;\rr)$ be the space of all continuous functions $f$ from $[0,1]^d$ to $\rr$ satisfying $f(x)=0$ for all $x\in \partial ([0,1]^d)$, endowed with the uniform metric
$$
d_{E, \infty}(f_1, f_2):=\sup_{x\in  [0,1]^d} |f_1(x)-f_2(x)|, \ \ \ f_1, f_2\in E,
$$
 and  let $E_T:= \mathcal C([0,T]; E)$
 be the space of all continuous functions from $[0,T]$ to $E$, endowed with the uniform metric
$$
d_{E_T, \infty}(u_1, u_2):=\sup_{(t,x)\in \times [0,T]\times[0,1]^d} |u_1(t, x)-u_2(t, x)|, \ \ \ u_1, u_2\in  E_T.
$$

Let $\{u(t,x), (t,x)\in[0,T]\times [0,1]^d\}$ be the unique solution of the equation \eqref{SPDE}.
For any  $\mu\in \mathcal M(E)$, let $P^{\mu}$ be the distribution of the solution  $\{u(t,x), (t,x)\in[0,T]\times [0,1]^d\}$ on $E_T$ such that the law of $u_0$ is $\mu$.  Particularly, if $\mu=\delta_{u_0}$ for some $u_0\in E$, we write $P^{u_0}:=P^{\delta_{\mu_0}}$ for short.

% In this paper, we establish the following result:
Here are the main results of this section.
 \bthm\label{thm transport}  Under  (C1), (C2) and $(H_{\eta})$, for any deterministic initial value $u_0\in E$.  the law $P^{u_0}$ satisfies the $T_2$-transportation inequality on the space $E_T$   with respect to the uniform metric.
 \nthm

Applying   Proposition \ref{estimates 2} and  Theorem \ref{thm transport}   and   using the same  approach  in the proof of \cite[Theorem 3.1]{WZ},  we can get the following  transportation inequality   for the stochastic
heat equation  with random initial values,  whose proof is omitted here.
\bcor Under  (C1), (C2) and $(H_{\eta})$, and $\mu\in \mathcal M(E)$. Then
\begin{equation}
W_2^2(Q, P^{\mu})\le 2CH(Q|P^{\mu}), \ \ \forall Q\in \mathcal M(E_T)
\end{equation}
holds for some constant $C>0$ if and only if
\begin{equation}
W_2^2(\nu, \mu)\le 2c H(\nu|\mu), \ \ \forall  \nu\in \mathcal M(E)
\end{equation}
holds for some constant $c>0$.
\ncor
\vskip0.3cm
\subsection{The proof of Theorem \ref{thm transport}}

We will apply the Girsanov theorem to prove Theorem \ref{thm transport}. To do this, we need the following lemma describing  all probability measures which are absolutely continuous with respect to $P^{u_0}$. It is analogous to \cite[Theorem 5.6]{DGW} in the setting of finite-dimensional Brownian motion  and  \cite[Lemma 3.1]{KS} in the setting of space-time white noise. For the completeness, we give its proof here.

\blem\label{lem Girsanov}
Let $Q$ be a probability measure on $E_T$ such that $Q\ll P^{u_0}$.
 Define a new probability measure $\mathbb Q$ on the probability space $(\Omega,\mathcal{F},\mathbb{P})$ by
  \begin{equation}\label{eq Q}
  d\mathbb Q:=\frac{dQ}{d P^{u_0}}(u)d\mathbb P.
\end{equation}
Then there exists an $\HH$-valued predictable process  $h=\{h(s), s\in[0,T] \}$ such that
\begin{align}
	\int_0^T \|h(s)\|_{\HH}^2ds<\infty, \quad \mathbb Q-a.s.,
\end{align}
and the process
\begin{align}\label{190725.224510}
	\widetilde B_t := B_t -\int_0^t h(s)ds, \quad t\in[0,T],
\end{align}
is a cylindrical Wiener process on $\HH$ under  $\mathbb Q$. Furthermore,
\beq\label{eq entropy}
H(Q|P^{u_0})=\frac{1}{2} \ee^{\mathbb Q} \int_0^T\|h(s)\|_{\HH}^2ds ,
\nneq
where $\ee^{\mathbb Q}$ denotes the expectation under the probability measure $\mathbb{Q}$.

\nlem

\begin{proof}
The proof is adapted  from  \cite[Theorem 5.6]{DGW}.
Let
\[
M_t:=\frac{d\mathbb Q}{d\mathbb P}\Big|_{\mathcal F_t}, \quad t\in[0,T].
\]
Then $(M_t)_{t\in[0,T]}$
is a nonnegative $\mathbb P$-martingale.
Let $\tau:=\inf\{t\ge0; M(t)=0\}\wedge T$ with the convention $\inf\emptyset :=\infty$. Then $\mathbb Q(\tau=T)=1$, and  the martingale $M$ can be represented as the stochastic exponential of another continuous local martingale $L$:
\begin{align}\label{eq exp}
M(t)=\exp\left(L(t)-\frac{1}{2}[L]_t\right), \ \ \ t<\tau,
\end{align}
where $L(t)=\int_0^t\frac{dM(s)}{M(s)}$ for $t<\tau$. By the martingale representation theorem (e.g., \cite[Theorem 2.3]{BD}), we know that there is a predictable process $h=\{h(t);t\in[0,\tau)\}$ valued in $\HH$ such that
\[
\int_0^t\|h(s)\|_{\HH}^2ds <+\infty, \quad t<\tau, \quad \mathbb P-a.s.,
\]
and
\[
L(t)=\int_0^t h(s)dB_s=\sum_{k=1}^{\infty}\int_0^t\langle h(s),e_k\rangle_{\HH}dB_s^k, \quad t<\tau,
\]
where the above integral is defined as in \eqref{eq int2}. By the Girsanov  theorem (e.g., \cite[Theorem 2.2]{BD} or \cite[Theorem 10.14]{DPZ}), we know that
$$
	\widetilde B_t := B_t -\int_0^t h(s)ds, \quad t\in[0,T]
$$
is a cylindrical Wiener process on $\HH$ under  $\mathbb Q$.

Let $\tau_n:=\inf\left\{t\in[0,\tau); [L]_t=n\right\}\wedge\tau$ with the same convention that $\inf\emptyset:=\infty$. Then $\tau_n\uparrow \tau$, $\mathbb P$-a.s., and by the martingale convergence theorem, we have
\begin{align*}
H(Q|P^{u_0})=&\mathbb E^{\mathbb P}M_T\log M_T=
%\mathbb E^{\mathbb P}M_{\tau}\log M_{\tau} =
\lim_{n\rightarrow\infty} \mathbb E^{\mathbb P}M_{T\wedge\tau_n}\log M_{T\wedge\tau_n}=
%\mathbb{E}^{\mathbb Q} \log M_T =
\lim_{n\rightarrow\infty} \mathbb E^{\mathbb Q}\log M_{T\wedge\tau_n}\\
=&\lim_{n\rightarrow\infty} \mathbb E^{\mathbb Q} \left(L(T\wedge\tau_n)-\frac12[L]_{T\wedge \tau_n}\right).
\end{align*}
By Girsanov  formula, $\{L(t\wedge\tau_n)-[L]_{t\wedge \tau_n}, t\in[0,T]\}$
is a $\mathbb Q$-local martingale, then a true martingale since its quadratic variation process under $\mathbb Q$, being again $\{[L]_{t\wedge \tau_n}, t\in[0,T]\}$, is bounded by $n$. Consequently, $\mathbb E^{\mathbb Q}(L(T\wedge\tau_n)-[L]_{T\wedge \tau_n})=0$. Substituting it into the proceeding equality and by the monotone convergence, we have
\[
H(Q|P^{u_0})=\frac{1}2\lim_{n\rightarrow\infty} \mathbb E^{\mathbb Q} ([L]_{T\wedge \tau_n})=\frac12\mathbb E^{\mathbb Q}[L]_T=\frac{1}{2}\mathbb E^{\mathbb Q}\int_0^T\|h(s)\|_{\HH}^2ds.
\]
The proof is complete.
\end{proof}

%\bprf
%  It is enough to prove the result for any probability measure $\nu$ on $\mathcal C([0,T]\times [0,1]^d;\mathbb R)$
% such that $\nu\ll \mu$ and $ H(\nu|\mu)<\infty$. Let $(\Omega, \mathcal F, \mathbb P)$ be a complete probability space on which $(B_t)=(B_t^k)_{k\ge1}$ is a cylindrical Wiener process on $\HH$ and let $\mathcal  F_t=\mathcal F_t^B=\sigma(B_s, s\le t)^{\mathbb P}$ (completion by $\mathbb P$). Let $u(t,x)$ be the unique solution of \eqref{SPDE}. Then the law of $u(t, \cdot)$ is $\mu$.
%  Define a new probability measure $\mathbb Q$ on the filtered probability space by
%  \begin{equation}\label{eq Q}
%  d\mathbb Q:=\frac{d\nu}{d\mu}(u)d\mathbb P.
%\end{equation}
%Let $M_t:=\frac{d\mathbb Q}{d\mathbb P}\big|_{\mathcal F_t}, \ \ t\in[0,T]$. Then $(M_t)_{t\in[0,T]}$ is a $\mathbb P$-martingale. Let $h(t,x)$ be the corresponding random field appeared in Lemma \ref{lem Girsanov}. Then by  Lemma \ref{lem Girsanov},     the  process
%$$
%\widetilde B^k_t=B_t^k-\int_0^t\langle h(s),e_k\rangle_{\HH}ds,\ \ \ k\ge1,
%$$
%are a sequence of independent standard Wiener processes under $\mathbb Q$.

\vskip 0.5cm

\bprf[Proof of Theorem \ref{thm transport}]
For any $Q\ll P^{u_0}$ such that $ H(Q|P^{u_0})<\infty$, let $\mathbb Q$ be defined as (\ref{eq Q}) and $h$ be the corresponding process appeared in Lemma \ref{lem Girsanov}. Then it is easy to see that the solution of equation \eqref{SPDE} satisfies the following equation:
\begin{align}\label{eq u}
 u(t,x)=& \int_{[0,1]^d} G_{t}(x,y) u_0(y)dy +\sum_{k\ge1}\int_0^t \langle  G_{t-s}(x, \cdot)\sigma(u(s,\cdot)),e_k\rangle_{\HH} d\widetilde B_s^k\notag\\
 &+\int_0^t\int_{[0,1]^d} G_{t-s}(x,y)b(u(s,y))dyds\notag\\
 &+\sum_{k\ge 1}\int_0^t\left\langle G_{t-s}(x, \cdot) \sigma(u(s,\cdot)), e_k\right\rangle_{\HH} \cdot\left\langle h(s), e_k\right\rangle_{\HH}
ds.
                  \end{align}
Consider the solution of the following equation:
\begin{align}\label{eq v}
 v(t,x)= & \int_{[0,1]^d} G_{t}(x,y) u_0(y)dy +\sum_{k\ge1}\int_0^t \langle  G_{t-s}(x, \cdot)\sigma(v(s,\cdot)),e_k\rangle_{\HH} d\widetilde B_s^k\notag\\
 &+\int_0^t\int_{[0,1]^d} G_{t-s}(x,y)b(v(s,y))dyds.
                  \end{align}
 By Lemma \ref{lem Girsanov}, \eqref{eq int2}, and (\ref{190724.213830}), it follows that under probability measure $\mathbb Q$, the law of $(v,u)$ forms a coupling of $(\mu,\nu)$. Therefore, by the definition of the Wasserstein distance, we have
\beq\label{eq uv}
W_{2}^2(Q, P^{u_0}) \le \ee^{\mathbb Q}\left[\sup_{(t,x)\in [0,T]\times [0,1]^d} |u(t,x)-v(t,x)|^2  \right].
\nneq
In view of \eqref{eq entropy} and \eqref{eq uv},  to prove the $T_2$-transportation inequality, it is sufficient to show that
\beq\label{eq tar}
\ee^{\qq}\left[\sup_{(t,x)\in [0,T]\times [0,1]^d} |u(t,x)-v(t,x)|^2 \right]\le C \ee^{\qq}\int_0^T\|h(s)\|^2_{\HH}ds.
\nneq
for some constant $C$ independent of $v$. From   \eqref{eq u} and \eqref{eq v}, we have
\begin{align}\label{eq u-v}
 u(t,x)-v(t,x)=&\sum_{k\ge 1}\int_0^t\left\langle G_{t-s}(x-\cdot)[\sigma(u(s,\cdot))-\sigma(v(s,\cdot))] , e_k\right\rangle_{\HH}
d\widetilde B_s^k\notag\\
 &+\int_0^t\int_{[0,1]^d} G_{t-s}(x,y)[b(u(s,y))-b(v(s,y))]dyds\notag\\
 &+ \sum_{k\ge 1} \int_0^t\left\langle G_{t-s}(x,\cdot)\sigma( u(s,\cdot)), e_k\right\rangle_{\HH}\cdot\left\langle h(s), e_k\right\rangle_{\HH}
ds .
   \end{align}
 Thus,
 \begin{align}\label{eq T}
 |u(t,x)-v(t,x)|^2\le &3\left|\sum_{k\ge 1}\int_0^t\left\langle G_{t-s}(x,\cdot)[\sigma(u(s,\cdot))-\sigma(v(s,\cdot))], e_k\right\rangle_{\HH}
d\widetilde B_s^k\right|^2  \notag\\
 &+3\left|\int_0^t\int_{[0,1]^d} G_{t-s}(x,y)[b(u(s,y))-b(v(s,y))]dyds\right|^2\notag\\
 &+3\left|\sum_{k\ge 1} \int_0^t\left\langle G_{t-s}(x,\cdot)\sigma( u(s,\cdot)), e_k\right\rangle_{\HH}\cdot\left\langle h(s), e_k\right\rangle_{\HH}ds\right|^2\notag\\
 =:& 3[ |I_1(t,x)|^2 + |I_1(t,x)|^2 + |I_3(t,x)|^2].
   \end{align}

\noindent
By Proposition \ref{estimates 2}, we obtain that for any $\varepsilon>0$,
\begin{align}\label{T1}
&\ee^{\qq}\left[\sup_{(t,x)\in[0,T]\times[0,1]^d}|I_1(t,x)|^2\right]\notag\\
\le&  \varepsilon \mathbb E^{\qq}\left[\sup_{(t,x)\in[0,T]\times[0,1]^d}|\sigma(u(t,x))-\sigma(v(t,x))|^2\right]\notag\\
&+ C_{T,2,\eta,\varepsilon} \mathbb E^{\qq}\int_0^T  \sup_{y\in[0,1]^d}|\sigma(u(s,y))-\sigma(v(s,y))|^2 ds\notag\\
\le & \varepsilon L_{\sigma}^2\mathbb E^{\qq}\left[\sup_{(t,x)\in[0,T]\times[0,1]^d}|u(t,x)-v(t,x)|^2\right]\notag\\
&+ C_{T,2,\eta,\varepsilon} L_{\sigma}^2\int_0^T   \mathbb E^{\qq} \sup_{(r,x)\in[0,s]\times[0,1]^d}  | u(r,x) - v(r,x) |^2ds,
\end{align}
where $C_{T,2,\eta,\varepsilon}$ is the constant   in (\ref{101.3}) with $p=2$.
\noindent
By Cauchy-Schwarz's inequality, (\ref{190724.161938}) in Appendix and the Lipschitz continuity of $b$, we obtain that
%\begin{align}\label{T2}
%&\ee^{\qq}\left[\sup_{(t,x)\in[0,T]\times[0,1]^d}|I_2(t,x)|^2\right]\notag\\
% \leq & L_b^2 \left(\sup_{(t,x)\in[0,T]\times[0,1]^d}\int_0^t\int_{[0,1]^d}G_{t-s}^{2}( x,y)dyds\right)\times \left(\int_0^T\int_{[0,1]^d} \ee^{\qq}\left[|u(s,y)-v(s,y)|^2\right]dyds\right)
%  \notag\\
%    \leq & C(L_b,T)\int_0^T\ee^{\qq}\left[\sup_{(r,y)\in[0,s]\times[0,1]^d}|u(r,y)-v(r,y)|^2\right]ds.
%\end{align}
\begin{align}\label{T2}
&\ee^{\qq}\left[\sup_{(t,x)\in[0,T]\times[0,1]^d}|I_2(t,x)|^2\right]\notag\\
 \leq & \left(\sup_{(t,x)\in[0,T]\times[0,1]^d}\int_0^t\int_{[0,1]^d}G_{t-s}( x,y)dyds\right) \nonumber\\
 & \times  \ee^{\qq}\left(\sup_{(t,x)\in[0,T]\times[0,1]^d} \int_0^t\int_{[0,1]^d} G_{t-s}(x,y) \left|b(u(s,y))-b(v(s,y))\right|^2dyds\right)
  \notag\\
    \leq & T L_b^2\ee^{\qq} \left(\sup_{(t,x)\in[0,T]\times[0,1]^d} \int_0^t \sup_{(r,y)\in[0,s]\times[0,1]^d}|u(r,y)-v(r,y)|^2 \int_{[0,1]^d} G_{t-s}(x,y) dy ds \right) \nonumber\\
    \leq & T L_b^2  \int_0^T \ee^{\qq} \sup_{(r,y)\in[0,s]\times[0,1]^d}|u(r,y)-v(r,y)|^2  ds  .
    \end{align}
For the third term, by assumption (C2) and Lemma \ref{Lem t} in Appendix, we have
\begin{align}\label{T3}
&\ee^{\qq}\left[\sup_{(t,x)\in[0,T]\times[0,1]^d}|I_3(t,x)|^2\right]\notag\\
 \leq &  K_{\sigma}^2\left(\sup_{(t,x)\in[0,T]\times[0,1]^d}\int_0^t \|G_{t-s}(x,\cdot)\|_{\HH}^2ds\right)\times \ee^{\qq}\left(\int_0^T\|h(s)\|_{\HH}^2ds\right)
  \notag\\
       \leq & K_{\sigma}^2 C_{G, T,\eta}\ee^{\qq}\int_0^T\|h(s)\|_{\HH}^2ds,
\end{align}
where $ C_{G, T,\eta}$ is the constant given in  \eqref{1907.1931.46}.
Now, for every $T>0$, define
\[
Y(T):=\ee^{\qq}\left[\sup_{(s,x)\in[0,T]\times[0,1]^d}|u(s,x)-v(s,x)|^2 \right].
\]

\noindent Recall that (see e.g. Theorem 3.2 in \cite{MS} or  Proposition 3.2 in \cite{LWZ}, combined with Garsia's lemma)
\begin{align}
   \ee^{\qq}\left[\sup_{(t,x)\in[0,T]\times[0,1]^d}|u(t,x)|^2\right] +
  \ee^{\qq}\left[\sup_{(t,x)\in[0,T]\times[0,1]^d}|v(t,x)|^2\right]< \infty.
\end{align}
Hence $Y(T)<\infty$ for any $T>0$.
Putting \eqref{eq T}-\eqref{T3} together, we have
% (those coefficients should be computed precisely)
\begin{align}
Y(T)\le& 3\varepsilon L_{\sigma}^2 Y(T) + 3(C_{T,2,\eta,\varepsilon} L_{\sigma}^2+TL_{b}^2)\int_0^T Y(s) ds \nonumber\\
&+ 3 K_{\sigma}^2 C_{G, T,\eta} \ee^{\qq}\int_0^T\|h(s)\|_{\HH}^2ds .
\end{align}
Taking $\varepsilon=\varepsilon_0:=\frac{1}{6L_{\sigma}^2}$ and subtracting $\frac{1}{2}Y(T)$ from both sides of the above inequality yield
\begin{align}\label{190724.170724}
	Y(T) \leq 6\left(C_{T,2,\eta,\varepsilon_0} L_{\sigma}^2+TL_{b}^2\right) \int_0^T Y(s) ds+ 6K_{\sigma}^2 C_{G, T,\eta} \ee^{\qq}\int_0^T\|h(s)\|_{\HH}^2ds .
\end{align}
Obviously, (\ref{190724.170724}) still holds if we replace $T$ with any $t\in [0,T]$.
Hence we can use Gronwall's inequality to obtain that
%(see,  e.g., Remark 16 in \cite{Dal}),
\[
Y(T)\le   6K_{\sigma}^2 C_{G, T,\eta} e^{6\left(C_{T,2,\eta,\varepsilon_0} L_{\sigma}^2+TL_{b}^2\right)T  } \ee^{\qq}\int_0^T\|h(s)\|_{\HH}^2ds.
\]
This proves (\ref{eq tar}).
The proof is complete.
  \nprf

\vskip 1.5cm
\section{ Appendix }
\noindent To make reading easier, we present here some results on the kernel $G$ associated with equation (\ref{heat kernel}).

  Let
\[
H_t(x):=\left(\frac{1}{4\pi t}\right)^{\frac d2}\exp\left(-\frac{|x|^2}{4t}\right), \quad x\in \rr^d,\  t>0.
\]
%It is known that the Fourier transform of $H_t$ is
%\[
%\int_{\mathbb{R}^d} e^{-2\pi i x\cdot \xi} H_t(x) dx =e^{-4\pi^2 t |\xi|^2} .
%\]
From Lemma 7 of \cite{van}, we have
\begin{align}
\label{190719.1930.11}	G_t (x,y) \le \ & H_t(x-y), \ \ \forall t>0, x,y\in [0,1]^d.	
\end{align}
Hence, it is easy to see that
\begin{align}
\label{190724.161938}	\int_{[0,1]^d} G_t(x,y) dy < & 1, \\
\label{103.2}  \int_{[0,1]^d} G_t(x,y)^2\,dy < & \sup_{y\in[0,1]^d}G_t(x,y)\times \int_{[0,1]^d} G_t(x,y)\,dy < (4\pi)^{-\frac{d}{2}} t^{-\frac{d}{2}}.
\end{align}

%From the Appendix of M\'{a}rquez-Carreras and Sarr\`{a} \cite{MS}, we have the following estimates.
\begin{lemma}\label{Lem t} Suppose hypothesis $(H_{\eta})$ holds. Then for any $t> 0$ and $x\in [0,1]^{d}$,
\begin{align}
\label{1907.1931.44} \| G_t (x,\cdot)\|_{\HH}^2 \le   K_{\eta} \left(1\vee \left(\frac{\eta}{8\pi^2 }\right)^{\eta}  t^{-\eta}\right),
\end{align}
where  $K_{\eta}$ is the constant in (\ref{H eta1}). Consequently, we have
\begin{align}\label{1907.1931.45} \int_0^T\| G_t (x,\cdot)\|_{\HH}^2dt \le  C_{G, T,  \eta}, \quad \forall x\in [0,1]^{d} ,
\end{align}
 where
 \begin{align}\label{1907.1931.46}
 C_{G, T,  \eta}:=
\left\{
  \begin{array}{ll}
   (1-\eta)^{-1}K_{\eta}\left(\frac{\eta}{8\pi^2}\right)^{\eta} T^{1-\eta},   &  \hbox{ if  $T\le \frac{\eta}{8\pi^2}$};\\
        K_{\eta} T+\frac{\eta^2}{8\pi^2(1-\eta)}, &  \hbox{ if $ T > \frac{\eta}{8\pi^2}$} .
   \end{array}
\right.
\end{align}
\end{lemma}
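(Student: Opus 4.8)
The plan is to dominate the Dirichlet heat kernel $G_t(x,\cdot)$ by the whole-space Gaussian kernel $H_t$, to evaluate the $\HH$-norm of the latter on the Fourier side, and then to control the resulting Gaussian spectral integral by $K_\eta$ through a pointwise comparison of $e^{-8\pi^2 t|\xi|^2}$ with $(1+|\xi|^2)^{-\eta}$. The time-integrated bound \eqref{1907.1931.45} then follows by an elementary one-variable integration.

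First I would reduce to the Gaussian kernel. Using the real-space quadratic form in \eqref{190719.194202}, the fact that $f\ge0$, and the pointwise domination $0\le G_t(x,y)\le H_t(x-y)$ from \eqref{190719.1930.11}, one replaces each factor $G$ by $H$ and enlarges the domain of integration to $\rr^d$ (both legitimate since the integrand is nonnegative), obtaining
\[
\|G_t(x,\cdot)\|_{\HH}^2 \le \int_{\rr^d}\int_{\rr^d} H_t(x-y)\,f(y-z)\,H_t(x-z)\,dy\,dz = \|H_t(x-\cdot)\|_{\HH}^2 .
\]
Since $y\mapsto H_t(x-y)$ is Schwartz (a translated Gaussian), the spectral identity in \eqref{190719.194202} applies, and because $\FF H_t(\xi)=e^{-4\pi^2 t|\xi|^2}$, translation only contributes a phase, so $|\FF[H_t(x-\cdot)](\xi)|^2=e^{-8\pi^2 t|\xi|^2}$ and
\[
\|H_t(x-\cdot)\|_{\HH}^2 = \int_{\rr^d} e^{-8\pi^2 t|\xi|^2}\,\lambda(d\xi),
\]
a quantity independent of $x$.

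The crux is the pointwise spectral estimate $e^{-8\pi^2 t|\xi|^2}(1+|\xi|^2)^{\eta}\le 1\vee\left(\frac{\eta}{8\pi^2}\right)^{\eta}t^{-\eta}$. Writing $c=8\pi^2 t$ and $r=|\xi|^2\ge0$, I set $\phi(r)=e^{-cr}(1+r)^{\eta}$, so that $\phi'(r)=e^{-cr}(1+r)^{\eta-1}\bigl[\eta-c(1+r)\bigr]$ and the only critical point is $r^{*}=\eta/c-1$. If $c>\eta$ then $r^{*}<0$, $\phi$ decreases on $[0,\infty)$, and $\sup\phi=\phi(0)=1=1\vee(\eta/c)^{\eta}$; if $c\le\eta$ then $\phi(r^{*})=e^{c-\eta}(\eta/c)^{\eta}\le(\eta/c)^{\eta}=1\vee(\eta/c)^{\eta}$, the inequality using $c-\eta\le0$. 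Substituting the resulting bound $e^{-8\pi^2 t|\xi|^2}\le\left(1\vee\left(\frac{\eta}{8\pi^2}\right)^{\eta}t^{-\eta}\right)(1+|\xi|^2)^{-\eta}$ into the spectral integral and invoking \eqref{H eta1} gives \eqref{1907.1931.44}.

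Finally, for \eqref{1907.1931.45} I integrate \eqref{1907.1931.44} over $t\in[0,T]$, splitting at the crossover point $t_0=\eta/(8\pi^2)$ where $\left(\frac{\eta}{8\pi^2}\right)^{\eta}t^{-\eta}$ equals $1$: for $T\le t_0$ the integrand is the power $\left(\frac{\eta}{8\pi^2}\right)^{\eta}t^{-\eta}$ throughout with $\int_0^T t^{-\eta}\,dt=T^{1-\eta}/(1-\eta)$, while for $T>t_0$ one adds $\int_0^{t_0}\left(\frac{\eta}{8\pi^2}\right)^{\eta}t^{-\eta}\,dt=\frac{\eta}{8\pi^2(1-\eta)}$ to $\int_{t_0}^{T}1\,dt$, yielding the constant $C_{G,T,\eta}$ in \eqref{1907.1931.46}. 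The only genuinely technical step is the calculus optimization in Step~3 above: obtaining the clean $1\vee(\cdots)$ form hinges on noticing that the interior maximum $e^{c-\eta}(\eta/c)^{\eta}$ of $\phi$ occurs only in the regime $c\le\eta$, where the prefactor $e^{c-\eta}\le1$ exactly cancels the spurious exponential growth; the kernel domination, the Gaussian Fourier transform, and the piecewise time integration are all routine.
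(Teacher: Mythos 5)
Your proof is correct and takes essentially the same route as the paper: domination of $G_t(x,\cdot)$ by the free Gaussian kernel $H_t$ via \eqref{190719.1930.11}, passage to the spectral side where the norm becomes $\int_{\rr^d}e^{-8\pi^2 t|\xi|^2}\lambda(d\xi)$, the pointwise bound $e^{-8\pi^2 t|\xi|^2}\le\bigl(1\vee(\eta/(8\pi^2))^{\eta}t^{-\eta}\bigr)(1+|\xi|^2)^{-\eta}$ combined with $(H_\eta)$, and then a split time integration at $t_0=\eta/(8\pi^2)$; your calculus optimization merely fills in a supremum estimate the paper asserts without detail. One caveat: your integration in the regime $T>\eta/(8\pi^2)$ actually yields $K_\eta T+K_\eta\eta^2/\bigl(8\pi^2(1-\eta)\bigr)$, not the constant $K_\eta T+\eta^2/\bigl(8\pi^2(1-\eta)\bigr)$ displayed in \eqref{1907.1931.46}, so you should not claim a verbatim match --- the missing factor $K_\eta$ on the second term is evidently a typo in the paper (its own proof of this step is only ``it is easily to obtain''), and your computation is the correct one.
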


\begin{proof}
	By (\ref{190719.194202}) and (\ref{190719.1930.11}),  we have for any $t>0$,
\begin{align*}
\|G_t(x,\cdot)\|_{\HH}^2 =&\int_{[0,1]^d}dy_1\int_{[0,1]^d}dy_2 G_{t}(x,y_1)G_{t}(x,y_2)  f(y_1-y_2)\nonumber\\
\le& \int_{\mathbb{R}^d}dy_1\int_{\mathbb{R}^d}dy_2 H_{t}(x-y_1)H_{t}(x-y_2)  f(y_1-y_2)\nonumber\\
=&\int_{\mathbb{R}^d}dy_1\int_{\mathbb{R}^d}dy_2 H_{t}(y_1)H_{t}(y_2)  f(y_1-y_2) \ \nonumber\\
= & \int_{\mathbb{R}^d} e^{-8\pi^2 t|\xi|^2} \lambda(d\xi) \nonumber\\
\le & \sup_{\xi\in\mathbb{R}^d}\left(e^{-8\pi^2 t|\xi|^2}\left(1+|\xi|^2\right)^{\eta}\right) \times \int_{\mathbb{R}^d}  \frac{1}{\left(1+|\xi|^2\right)^{\eta}}\lambda(d\xi) \nonumber\\
\le  &K_{\eta} \left(1\vee \left(\frac{\eta}{8\pi^2 }\right)^{\eta}  t^{-\eta}\right).
\end{align*}
 Based on the above inequality,  it is easily to  obtain  \eqref{1907.1931.45}.
The proof is complete.
\end{proof}

\vskip0.3cm

\noindent{\bf Acknowledgments}: S. Shang is supported by the Fundamental Research Funds for the Central Universities (WK0010000057), Project funded by China Postdoctoral Science Foundation (2019M652174).   R. Wang is supported by National  Natural Science Foundation of China (11871382,  11671076,  11431014).

\end{document}